\documentclass[12pt,final]{article}
\usepackage{amsmath,amsthm,amsfonts,amssymb,graphicx,enumerate,psfrag}
\usepackage{tikz,pgfplots}
\usepackage{mathrsfs}
\usepackage{fullpage}

\usepackage[colorlinks,citecolor=blue,urlcolor=blue]{hyperref}
\definecolor{myblue}{RGB}{51,51,178}
\definecolor{myred}{RGB}{189,26,26}
\definecolor{mygreen}{RGB}{0,128,0}
\usepackage[utf8]{inputenc} 
\newtheorem{lemma}{Lemma}

\newtheorem{definition}{Definition}
\newtheorem{theorem}{Theorem}

\newtheorem{corollary}{Corollary}

\newtheorem{conjecture}{Conjecture}

\newcommand{\dN}{\mathbb {N}}

\newcommand{\cQ}{\mathcal {Q}}

\newcommand{\dR}{\mathbb {R}}
\newcommand{\CD}{\mathrm{CD}}

\newcommand{\dist}{{\mathrm{dist}}}

\title{Edge-regular graphs with non-negative curvature have  polynomial growth}
\author{Guy Blachar, Hervé Pajot and Justin Salez}
\begin{document}
\maketitle
\begin{abstract}
A long-standing conjecture in the emerging discrete Bakry-\'Emery theory  asserts that bounded-degree graphs satisfying $\CD(0,\infty)$ have polynomial growth. In the present paper, we prove this conjecture for all edge-regular graphs, and even obtain a volume doubling estimate with a constant that depends only on the degree. This is made possible thanks to the discovery of a surprising self-improvement phenomenon, which seems of independent interest: any edge-regular graph satisfying $\CD(\kappa,\infty)$ for some $\kappa\in\dR$  must in fact satisfy $\CD(\kappa,n)$ for some explicit, universal and optimal dimension parameter $n$. 
\end{abstract}

\section{Introduction}
Let  $G=(V,E)$ be a locally finite, connected, simple, undirected  graph, and let $\Delta$ denote its Laplacian operator, which  acts on any function $f\in\dR^V$ as follows:
\begin{eqnarray}
\label{def:Delta}
\forall x\in V,\qquad \Delta f(x) & := & \sum_{y\in N_x}\left(f(y)-f(x)\right),
\end{eqnarray}
where $N_x:=\{y\in V\colon\{x,y\}\in E\}$ is the set of neighbors of $x$. Following \cite{MR889476,MR1665591,MR2644381}, we define the carré du champ operator $\Gamma_1$ and its iterated version $\Gamma_2$ inductively via
\begin{eqnarray}
\Gamma_{k}(f,g) & := & \frac{1}{2}\left(\Delta \Gamma_{k-1}(f,g)-\Gamma_{k-1}(f,\Delta g)-\Gamma_{k-1}(g,\Delta f)\right),
\end{eqnarray} 
for all $f,g\in\dR^V$, with $\Gamma_0(f,g):=fg$. Given two parameters $\kappa\in\dR$ and $n\in(0,\infty]$, we then say that  $G$ satisfies the \emph{Curvature-Dimension} condition $\CD(\kappa,n)$ if 
\begin{eqnarray}
\label{def:CD}
\forall f\in\dR^V,\quad 
\Gamma_2(f,f) & \ge & \kappa\, \Gamma_1(f,f) + \frac{(\Delta f)^2}{n}.
\end{eqnarray}As observed by Bakry and \'Emery in their seminal work \cite{MR889476}, this pointwise inequality holds when  $\Delta$ is the Laplace-Beltrami operator of a $n-$dimensional Riemannian manifold  with Ricci curvature at least $\kappa$, and it turns out to have a number of remarkable analytical, geometric and probabilistic consequences for the associated diffusion. We refer the unfamiliar reader to the excellent textbook \cite{MR3155209} for an introduction to this beautiful theory, and to  \cite{salez2025modernaspectsmarkovchains,Salez2026Cutoff,pedrotti2025newcutoffcriterionnonnegatively} for recent connections with the cutoff phenomenon. 

In recent years, considerable efforts have been made to develop a discrete analogue of the Bakry-\'Emery theory, where manifolds are replaced by graphs  \cite{MR1665591,MR2644381,MR3173151,MR3492631, MR3592766,MR3776357,MR3906157,MR4045968,MR4946365,MR4545901,MR4828188,salez2025intrinsicregularitydiscretelogsobolev}. Unfortunately, several important pieces   are still missing, due to the notorious failure of the chain rule for differentiating composed functions on discrete spaces. In particular, the following fundamental relation between curvature and volume is still lacking, and constitutes one of the most frustrating gaps between the Riemannian Bakry-\'Emery theory and its emerging discrete  counterpart. Throughout the paper, we write $\dist(\cdot,\cdot)$ for the usual graph distance, and  $B(x,r)$ for the corresponding open ball of radius $r$ around $x$: 
\begin{eqnarray}
B(x,r) & := & \left\{y\in V\colon \dist(x,y)<r\right\}. 
\end{eqnarray}
\begin{conjecture}[Non-negative curvature implies polynomial growth]\label{conj:main} If $G$ has bounded degrees and satisfies $\CD(0,\infty)$, then $G$ has polynomial growth. More precisely,
\begin{eqnarray}
\label{def:poly}
\forall x\in V,\quad \forall r\in \dN,\quad \# B(x,r) & \le & r^D,
\end{eqnarray}
where the constant $D<\infty$  depends only on the maximum degree. 
\end{conjecture}
The degree restriction is of course necessary here, since choosing $r=2$ in (\ref{def:poly}) shows that $D$ is at least logarithmic in the maximum degree. 
Conjecture \ref{conj:main} has received a lot of attention \cite{hua2017ricci,münch2019,russ2025}, and several results have been establish in its direction \cite{MR3316971,MR4536468,MR4567745,hutchcroft2025}. In particular, a major breakthrough was recently obtained by Hutchcroft  and M\"unch \cite{hutchcroft2025}, who proved that bounded-degree graphs with non-negative  curvature in the (different) sense of Ollivier \cite{MR2484937} have sub-exponential growth. Another remarkable result in the direction of Conjecture \ref{conj:main} is the fact that polynomial growth is indeed guaranteed when the assumption  $\CD(0,\infty)$ is strengthened to its finite-dimensional refinement $\CD(0,n)$, for any  $n<\infty$.  More precisely, the following \emph{Volume Doubling} estimate was recently obtained by Münch \cite{münch2019} for finite graphs and by Pajot and Russ \cite{russ2025} in the infinite case (see also \cite{MR3316971} for a closely related,  wearker result).
\begin{theorem}[$\CD(0,n)$ implies Volume Doubling, c.f. \cite{münch2019,russ2025}]\label{th:dim} If $G$ has bounded degrees and satisfies $\CD(0,n)$ for some $n<\infty$, then 
\begin{eqnarray}
\label{def:VD}
\forall x\in V,\quad \forall r\in\dN,\quad \# B(x,2r) & \le & C\, \# B(x,r),
\end{eqnarray}
where the constant $C<\infty$ depends only on $n$ and the maximum degree. 
\end{theorem}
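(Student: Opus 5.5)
The plan is to follow the semigroup route of M\"unch and Pajot--Russ: use $\CD(0,n)$ to obtain a Li--Yau type gradient estimate for the heat semigroup $P_t=e^{t\Delta}$, then a parabolic Harnack inequality, and finally compare heat kernel values at times $r^2$ and $4r^2$ to get (\ref{def:VD}). Because the discrete chain rule fails, I will not work with $\log P_tf$ directly. Instead, as in Bauer--Horn--Lin--Lippner--Mangoubi--Yau, I will use a square-root or exponential curvature-dimension variant. That is, I first show that $\CD(0,n)$ together with the degree bound $d_{\max}$ implies an inequality of the form $\CD\psi(0,n')$ (or $\CDE(0,n')$) for the operator
\begin{equation*}
\widetilde\Gamma(f)=\Gamma_1(\sqrt f)\quad\text{on positive } f,
\end{equation*}
with $n'$ depending only on $n$ and $d_{\max}$. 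This reduction step is the first thing I would attempt.

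\textbf{Step 1 (Li--Yau estimate).} For positive solutions $u=P_tf$, aim to prove
\begin{equation*}
\frac{\Gamma_1(\sqrt u)}{u}-\frac{\partial_t\sqrt u}{\sqrt u}\le \frac{C(n,d_{\max})}{t}.
\end{equation*}
The method is a maximum-principle argument, which on an infinite graph becomes a semigroup interpolation argument: differentiate $s\mapsto P_s(\Phi(P_{t-s}f))$ for a suitable convex $\Phi$. Bounded degree is used to control the error terms coming from $\Delta\sqrt u\neq \Delta u/(2\sqrt u)$. \textbf{Step 2 (Harnack).} Integrate the Li--Yau estimate along a geodesic path of length $\dist(x,y)$. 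Each edge costs a bounded factor, since $u(y)\ge u(x)/(d_{\max}+1)$-type local comparisons hold for the heat equation over short times. This yields
\begin{equation*}
p_t(x,x)\le p_{T}(y,x)\,(T/t)^{C}e^{C\dist(x,y)^2/(T-t)}.
\end{equation*}

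\textbf{Step 3 (volume doubling).} Two heat kernel estimates are needed.
\begin{itemize}
\item On-diagonal upper bound: $p_{r^2}(x,x)\lesssim 1/\#B(x,r)$. This follows from Harnack at scale $r$ together with $\sum_y p_t(x,y)=1$, after bounding the kernel on $B(x,r)$ from below by $p_{r^2/2}(x,x)$ up to constants.
\item Lower bound: $p_{4r^2}(x,x)\gtrsim 1/\#B(x,2r)$. This requires showing that a fixed fraction of the heat mass stays inside $B(x,2r)$ up to time $4r^2$. I will obtain it from a Gaussian-type tail bound, using the same $\CD(0,n)$ gradient control to bound $\Gamma_1(P_t\mathbf 1_{B})$, via Cauchy--Schwarz on $p_{2t}(x,x)=\sum_y p_t(x,y)^2$.
\end{itemize}
Combining these with $p_{r^2}(x,x)\le C\,p_{4r^2}(x,x)$ (Harnack in time) gives $\#B(x,2r)\le C\#B(x,r)$, with $C$ depending only on $n$ and $d_{\max}$. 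The small-$r$ cases are handled trivially by the degree bound.

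The main obstacle is Step 1: deriving a usable Li--Yau inequality from plain $\CD(0,n)$ without the chain rule. I would first try to prove directly that $\CD(0,n)$ with degree at most $d_{\max}$ implies the exponential curvature-dimension condition with a worse dimension. The approach is to expand $\Gamma_2(\sqrt u)$ edge by edge and absorb the discrepancy terms using the bounded ratio $u(y)/u(x)$ between neighbours, which follows from a short-time local Harnack inequality. If that fails, the fallback is the Pajot--Russ approach, which works with $\Gamma_1$ estimates on $P_t$ of indicator functions and avoids logarithms altogether.
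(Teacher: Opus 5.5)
The paper gives no proof of this statement; it quotes it from M\"unch and Pajot--Russ. Your outline (Li--Yau, then Harnack, then on-diagonal heat kernel bounds, then doubling) is the standard architecture, but its key step is missing, and the final combination is wrong.

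\textbf{Step~1 is missing, and the proposed mechanism fails.} Step~1 is the whole difficulty: without a chain rule, nothing in your plan yet connects $\CD(0,n)$ to a scale-invariant bound $C(n,d_{\max})/t$.
\begin{itemize}
\item There is no uniform bound on $u(y)/u(x)$ for neighbours. On $\mathbb{Z}$, which satisfies $\CD(0,n)$ for a finite $n$, one has $p_t(0,D-1)/p_t(0,D)\sim D/t$ as $D\to\infty$ with $t$ fixed. Near the source the ratio is of order $1/t$ as $t\to0$.
\item The local comparisons available for the heat equation relate \emph{different} times, $u(t+s,y)\ge p_s(y,x)\,u(t,x)$. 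They say nothing about same-time values.
\item So any constant obtained by absorbing $\Delta\sqrt u-\Delta u/(2\sqrt u)$ with a ratio bound depends on $t$ and on location. That destroys the $C/t$ form that Step~2 needs.
\item A maximum-principle proof needs the inequality exactly at the maximising vertex, where you have no ratio control a priori.
\item The implication $\CD(0,n)\Rightarrow\mathrm{CDE}(0,n')$ is asserted without argument, and the fallback is a citation rather than an argument.
\end{itemize}

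\textbf{The combination in Step~3 runs the wrong way.} Your three bounds allow $\#B(x,2r)/\#B(x,r)$ to be arbitrarily large. Chaining them gives only
\[
\frac{1}{\#B(x,2r)}\lesssim p_{4r^2}(x,x)\le p_{r^2}(x,x)\lesssim \frac{1}{\#B(x,r)},
\]
which is the trivial direction, and Harnack in time is never used. You need the lower bound at the small scale and the upper bound at the large one:
\[
\frac{c}{\#B(x,r)}\ \le\ p_{\varepsilon r^2}(x,x)\ \le\ C\,p_{4r^2}(x,x)\ \le\ \frac{C'}{\#B(x,2r)}.
\]

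\textbf{The non-escape bound is not justified by your tools.} The first inequality above needs half the mass of $p_s(x,\cdot)$ to lie in $B(x,K\sqrt s)$.
\begin{itemize}
\item Gaussian tail bounds for the mass outside a ball normally come from summing off-diagonal estimates over annuli. That already presupposes doubling.
\item The bound $\Gamma_1(P_t\mathbf 1_B)\le 1/(2t)$ controls oscillation of $P_t\mathbf 1_B$, not the drift of $\dist(x,X_t)$.
\end{itemize}
Non-escape does follow from your Li--Yau target itself:
\begin{itemize}
\item For $u=p_t(x,\cdot)$, sum $\Gamma_1(\sqrt u)\le\frac12\partial_t u+\frac{C}{t}u$ over $V$. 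This gives $\sum_y\Gamma_1(\sqrt u)(y)\le C/t$.
\item Cauchy--Schwarz then yields $\sum_{y\in V}\sum_{z\in N_y}|u(z)-u(y)|\le\sqrt{8Cd_{\max}/t}$.
\item Hence $\frac{d}{dt}P_t\rho(x)\le\sqrt{2Cd_{\max}/t}$ for $\rho=\dist(x,\cdot)$, so $P_t\rho(x)\le 2\sqrt{2Cd_{\max}t}$.
\item Markov's inequality finishes the argument.
\end{itemize}
So Step~3 can be repaired, and everything then rests on the missing Step~1.
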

Note that this volume doubling estimate can readily be iterated to yield a polynomial growth estimate of the form (\ref{def:poly}), with $D=1+\log_2 C$. Unfortunately, the constant $C$ obtained in \cite{münch2019,russ2025} tends to infinity with $n$, and new ideas seem to be needed to reach Conjecture \ref{conj:main}. 

In the present paper, we establish Conjecture \ref{conj:main} for all edge-regular graphs. Let us here recall that $G$ is \emph{edge-regular} if there exist two integers $d,\tau\ge 0$ such that any vertex has exactly $d$ neighbors, and any two adjacent vertices have exactly $\tau$ common neighbors:
\begin{eqnarray}
\label{def:edgereg}
\forall x\in V,\quad \# N_x\ =\ d, & \textrm{ and } & \forall \{x,y\}\in E,\quad  \#(N_x\cap N_y)\ =\ \tau.
\end{eqnarray}
For example, any  triangle-free regular graph is edge-regular, as well as any Cayley graph generated by a conjugacy class. Our main contribution is the surprising discovery that an edge-regular graph satisfying $\CD(0,\infty)$ must actually  satisfy the stronger property $\CD(0,d)$, where $d$ is the degree. Our result is in fact substantially stronger: for any value of the curvature parameter $\kappa\in\dR$, we determine the optimal value of the dimension parameter $n$ such that $\CD(\kappa,\infty)$ is equivalent to $\CD(\kappa,n)$, simultaneously for all graphs satisfying (\ref{def:edgereg}).
\begin{theorem}[Self-improvement of the Bakry-\'Emery condition on edge-regular graphs]\label{th:main}Fix $\kappa\in\dR$ and two integers $d,\tau\ge 0$, and suppose that $G$ satisfies $\CD(\kappa,\infty)$ and (\ref{def:edgereg}). Then, 
\begin{enumerate}
\item  $2\kappa\le 4+\tau$; 
\item $\CD(\kappa,n)$ holds for $n:=4d/(4+\tau-2\kappa)$.
\item $\CD(\kappa,n')$ fails for any $n'<n$. 
\end{enumerate}
\end{theorem}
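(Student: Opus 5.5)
The plan is to reduce everything to one well-chosen test function and a Cauchy--Schwarz argument, so that I never have to diagonalise the full local $\Gamma_2$ form. Fix a vertex $x$. At $x$, the quantity $q(f):=\Gamma_2(f,f)(x)-\kappa\Gamma_1(f,f)(x)$ is a quadratic form in the values of $f$ on $B(x,3)$. By (\ref{def:CD}) with $n=\infty$, the hypothesis $\CD(\kappa,\infty)$ says exactly that $q\ge 0$ for every $f$. Hence its polar bilinear form $q(f,g)=\Gamma_2(f,g)(x)-\kappa\Gamma_1(f,g)(x)$ satisfies
\[
q(f,g)^2\le q(f)\,q(g).
\]
I will take for $g$ the local distance function $u:=\dist(x,\cdot)$. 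It suffices to know $u$ on $B(x,3)$, where it takes the value $0$ at $x$, $1$ on $N_x$ and $2$ at distance two.

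The first step is to compute $q(f,u)$ and show that it equals $c\,\Delta f(x)$ for an explicit constant $c$ depending only on $d,\tau,\kappa$. I will expand
\[
2\Gamma_2(f,u)=\Delta\Gamma_1(f,u)-\Gamma_1(f,\Delta u)-\Gamma_1(u,\Delta f)
\]
directly. Edge-regularity enters in two places. First, $\Delta u(x)=d$, and for each $y\in N_x$ the value $\Delta u(y)=-1+(d-1-\tau)$ is the same constant, because $y$ has one neighbour at $x$, $\tau$ neighbours in $N_x$ and $d-1-\tau$ neighbours at distance two. This makes $\Gamma_1(f,\Delta u)(x)$ a multiple of $\Delta f(x)$. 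Second, the terms involving values of $f$ at distance two come from $\Delta\Gamma_1(f,u)$ and from $\Gamma_1(u,\Delta f)$. Each vertex $z$ at distance two enters both with weight equal to the number of its neighbours in $N_x$, with opposite signs, so these contributions should cancel. The terms supported on $N_x$ should then collapse, using $\#(N_x\cap N_y)=\tau$, into a multiple of $\sum_{y\in N_x}(f(y)-f(x))=\Delta f(x)$. The same bookkeeping gives $q(u)$ explicitly. I expect $q(u)=\frac{d}{8}(4+\tau-2\kappa)$ up to the normalisation of $\Gamma_1$, and $c^2/q(u)=1/n$ with $n=4d/(4+\tau-2\kappa)$.

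The three claims then follow in turn. Part 1 follows from $q(u)\ge 0$, which is $\CD(\kappa,\infty)$ applied to $u$ itself, and gives $2\kappa\le 4+\tau$. For part 2, assume first $2\kappa<4+\tau$, so that $q(u)>0$. Cauchy--Schwarz then yields
\[
q(f)\ \ge\ \frac{q(f,u)^2}{q(u)}\ =\ \frac{c^2}{q(u)}\,(\Delta f(x))^2\ =\ \frac{(\Delta f(x))^2}{n},
\]
which is exactly $\CD(\kappa,n)$ at $x$. In the boundary case $2\kappa=4+\tau$ we have $n=\infty$, and the claim is just the hypothesis. For part 3, take $f=u$ itself: Cauchy--Schwarz is then an equality, so $q(u)=(\Delta u(x))^2/n=d^2/n$, and $\CD(\kappa,n')$ with $n'<n$ would require $d^2/n\ge d^2/n'$, which is false.

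The main obstacle is the first step, namely the exact identity $q(f,u)=c\,\Delta f(x)$. It has to hold for every $f$, including $f$ whose values at distance two are arbitrary. If the weights attached to distance-two vertices do not cancel exactly, I would modify the test function. I would keep $u=0$ at $x$ and $u=1$ on $N_x$, but let its values at distance two, and possibly at distance three, be free parameters chosen to kill those terms. The constraint is that edge-regularity must still force the coefficients on $N_x$ to be all equal. Checking this cancellation carefully, together with the exact value of $q(u)$, is where I expect the real work to lie.
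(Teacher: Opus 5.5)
Your proposal is correct and follows the paper's proof essentially step for step. The paper also applies Cauchy--Schwarz to the positive semi-definite form $\Gamma_2-\kappa\Gamma_1$ at a vertex, with the distance function as test function (its ``height function'' after dividing by $d$). The distance-two terms cancel exactly as you predict: they are the $\Delta^2 f(x)$ contributions. So no modified test function is needed.

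The only slip is your guessed constant. With the paper's normalisation one gets $q(f,u)=\frac{4+\tau-2\kappa}{4}\,\Delta f(x)$, hence $q(u)=\frac{d}{4}(4+\tau-2\kappa)$ rather than $\frac{d}{8}(4+\tau-2\kappa)$. This corrected value is what gives $c^2/q(u)=1/n$.
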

In particular, any edge-regular graph satisfying $\CD(0,\infty)$ automatically satisfies $\CD(0,d)$ where $d$ is the degree, as promised. Combining this with Theorem \ref{th:dim}, we deduce that Conjecture \ref{conj:main} holds, in a strong sense, for all edge-regular graphs. 
\begin{corollary}[Volume doubling for edge-regular graphs with non-negative curvature]If $G$ is edge-regular and satisfies $\CD(0,\infty)$, then the Volume Doubling property (\ref{def:VD}) holds with a constant $C$ that depends only on the degree. In particular,  $G$ satisfies Conjecture \ref{conj:main}. 
\end{corollary}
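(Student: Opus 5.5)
The plan is to deduce the corollary directly from Theorem \ref{th:main} with $\kappa=0$, combined with Theorem \ref{th:dim}. Let $G$ be edge-regular with parameters $d,\tau\ge 0$ as in (\ref{def:edgereg}), and assume it satisfies $\CD(0,\infty)$.

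\textbf{Step 1: self-improvement.} Applying Theorem \ref{th:main} with $\kappa=0$ gives $\CD(0,n)$ with $n=4d/(4+\tau)$. Since $\tau\ge 0$, we have $n\le d$. The term $(\Delta f)^2/n$ in (\ref{def:CD}) is non-increasing in $n$, so $\CD(0,n)$ implies $\CD(0,n'')$ for every $n''\ge n$. In particular $G$ satisfies $\CD(0,d)$. I pass to $n''=d$ on purpose, so that the dimension parameter depends only on the degree and not on $\tau$. (Alternatively, since $0\le\tau\le d-1$, one could take a maximum over finitely many values.) The degenerate case $d=0$ is a single vertex and is trivial.

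\textbf{Step 2: volume doubling.} The graph has maximum degree exactly $d$, so Theorem \ref{th:dim} applied with dimension $d$ gives (\ref{def:VD}) with a constant $C=C(d)<\infty$. This constant depends only on $d$, as required. Enlarging it if necessary, we may assume $C\ge 2$.

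\textbf{Step 3: polynomial growth.} Since $B(x,1)=\{x\}$, iterating (\ref{def:VD}) gives $\#B(x,2^k)\le C^k$ for all $k\in\dN$.
\begin{itemize}
\item For $r=1$, we have $\#B(x,1)=1\le 1^D$ for any $D$.
\item For $r\ge 2$, let $k=\lceil\log_2 r\rceil\le 1+\log_2 r$. By monotonicity of balls,
\[
\#B(x,r)\le \#B(x,2^k)\le C^{k}\le C\cdot r^{\log_2 C}.
\]
Since $r\ge2$, we have $C=2^{\log_2 C}\le r^{\log_2 C}$. Hence $\#B(x,r)\le r^{2\log_2 C}$.
\end{itemize}
Thus (\ref{def:poly}) holds with $D:=2\log_2 C(d)$, which depends only on $d$. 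This establishes Conjecture \ref{conj:main} for edge-regular graphs.

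There is no real obstacle here: all the substance lies in Theorem \ref{th:main}. The only points requiring care are the monotonicity of $\CD(0,\cdot)$ in the dimension, used to remove the dependence on $\tau$, and the elementary bookkeeping converting doubling into the exact form $r^D$.
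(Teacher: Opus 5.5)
Your proof is correct and follows the paper's route: Theorem \ref{th:main} with $\kappa=0$ gives $\CD(0,4d/(4+\tau))$, hence $\CD(0,d)$ by monotonicity in the dimension. Theorem \ref{th:dim} then gives volume doubling with $C=C(d)$, and iterating it yields polynomial growth; your explicit bookkeeping with $D=2\log_2 C$ is a careful version of the paper's one-line remark about iterating the doubling estimate.
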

Whether the edge-regularity assumption in  Theorem \ref{th:main} can be relaxed is a natural question, with direct implications for Conjecture \ref{conj:main}. We note that at least \emph{some} restrictions are necessary. Indeed, in Section \ref{sec:counterexample}, we will construct a $7-$vertex graph satisfying $\CD(0,\infty)$ but not $\CD(0,n)$ for any $n<\infty$. 
We were not able to produce a regular graph with those properties, and we therefore propose the following natural strengthening of Theorem \ref{th:main}. 
\begin{conjecture}
Any $d-$regular graph satisfying $\CD(0,\infty)$ satisfies $\CD(0,d)$.
\end{conjecture}

\section{Proof of the main result}
 We will naturally say that the $\CD(\kappa,n)$ condition holds at a vertex $o\in V$ if 
\begin{eqnarray}
\label{def:locCD}
\Gamma_2(f,f)(o) & \ge &  \kappa\,\Gamma_1(f,f)(o) + \frac{(\Delta f(o))^2}{n},
\end{eqnarray}
for all  $f\in \dR^V$. Our proof of Theorem \ref{th:main} relies on the construction of \emph{height functions}. 
\begin{definition}[Height functions]Given  two parameters $\kappa,\lambda\in\dR$ and a vertex $o\in V$, we say that $g\in\dR^V$ is a $(\kappa,\lambda)-$height function at $o$ if $\Delta g(o)  =  1$ and for all $f\in \dR^V$,
\begin{eqnarray*}
\Gamma_2(f,g)(o) & = & \kappa\,\Gamma_1(f,g)(o) + \lambda\, \Delta f(o).
\end{eqnarray*}
\end{definition}
The interest of this mysterious definition is contained in the following result, which shows that height functions systematically provide an optimal self-improvement in the local Bakry-\'Emery criterion. As the reader will see, the proof does not use the specific form of the discrete Laplacian (\ref{def:Delta}): it applies to any Markov generator $\Delta$ on any state space. The converse part will not be used here, but it guarantees that there is no intrinsic limitation in using height functions as a general tool to establish curvature-dimension estimates. 
\begin{lemma}[Height functions yield optimal Bakry-\'Emery self-improvement]\label{lm:main} If $\CD(\kappa,\infty)$ holds at $o$ and  if there exists a $(\kappa,\lambda)-$height function at $o$, then
\begin{enumerate}
\item  $\lambda\ge 0$; 
\item $\CD(\kappa,n)$ holds at $o$, with $n:=1/\lambda$.
\item $\CD(\kappa,n')$ fails at $o$, for any $n'<n$. 
\end{enumerate}
Conversely, if  $\CD(\kappa,n)$ holds at $o$ for some $n<\infty$ and if $V$ has at least two points, then there exists a $(\kappa,\lambda)-$height function at $o$ for some $\lambda \ge 1/n$.
\end{lemma}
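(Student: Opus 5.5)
The plan is to work with the quadratic form $Q(f):=\Gamma_2(f,f)(o)-\kappa\,\Gamma_1(f,f)(o)$. Its associated bilinear form is $Q(f,h)=\Gamma_2(f,h)(o)-\kappa\Gamma_1(f,h)(o)$. By $\CD(\kappa,\infty)$ at $o$, $Q$ is positive semidefinite. This gives the Cauchy--Schwarz inequality $Q(f,h)^2\le Q(f)Q(h)$. The height function condition says exactly that $Q(f,g)=\lambda\,\Delta f(o)$ for every $f$. Taking $f=g$ gives $Q(g)=\lambda\,\Delta g(o)=\lambda$, and since $Q\ge 0$ this proves item 1, $\lambda\ge0$.

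For item 2, apply Cauchy--Schwarz with $h=g$:
\begin{eqnarray*}
\lambda^2(\Delta f(o))^2 & = & Q(f,g)^2\ \le\ \lambda\, Q(f).
\end{eqnarray*}
If $\lambda>0$, dividing by $\lambda$ gives $Q(f)\ge \lambda(\Delta f(o))^2=(\Delta f(o))^2/n$, which is $\CD(\kappa,n)$ at $o$. If $\lambda=0$, then $n=\infty$ and the claim is just the hypothesis.

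For item 3, test the inequality with $f=g$. We get $Q(g)=\lambda$ while $(\Delta g(o))^2/n'=1/n'$. For $n'<n=1/\lambda$ we have $1/n'>\lambda$, so the inequality fails. When $\lambda=0$, any finite $n'$ fails in the same way.

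For the converse, assume $\CD(\kappa,n)$ holds at $o$ with $n<\infty$. Then $Q(f)\ge (\Delta f(o))^2/n$. Consider the linear functional $\ell(f):=\Delta f(o)$. First, it is nonzero: since $\Delta$ is a Markov generator, take $V$ with at least two points and $f=\mathbf 1_{\{y\}}$ for some $y$ with a positive jump rate from $o$. (If no such $y$ exists, then $\Delta f(o)\equiv0$. I would handle this case separately, or, as I expect the authors do, note that connectivity excludes it.) Next, $\ell$ vanishes on the kernel of $Q$, since $Q(f)=0$ forces $\Delta f(o)=0$. Work on the finite-dimensional space of functions modulo those not seen by $\Gamma_2(\cdot,\cdot)(o)$; the quantities at $o$ depend only on values in the ball of radius $2$. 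On this space $\ell$ lies in the range of $Q$, so Riesz representation for the semi-inner product $Q$ gives some $h$ with $Q(f,h)=\ell(f)$ for all $f$. Then $Q(h)=\ell(h)=\Delta h(o)$. This value is positive, because otherwise $Q(h)=0$ and hence $\ell\equiv 0$. Set $g:=h/\Delta h(o)$. Then $\Delta g(o)=1$ and $Q(f,g)=\lambda\Delta f(o)$ with $\lambda=1/\Delta h(o)$.

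It remains to show $\lambda\ge 1/n$. Apply the hypothesis to $g$ itself: $\lambda=Q(g)\ge (\Delta g(o))^2/n=1/n$.

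The main obstacle is the converse, specifically the representation step. One has to make the finite-dimensionality precise for a general state space; a locally finite support of the relevant quantities suffices. One also has to check that $\ell$ vanishes on the null space of $Q$, which is exactly where finiteness of $n$ is used.
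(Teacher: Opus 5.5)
Your proposal is correct and follows essentially the same route as the paper: Cauchy--Schwarz for the positive semi-definite form $\mathcal{Q}$ tested against the height function, with equality at $f=g$ giving optimality. For the converse you likewise use Riesz representation of $f\mapsto\Delta f(o)$, normalize, and test the hypothesis on $g$ itself. You add two points of care that the paper leaves implicit: the explicit treatment of $\lambda=0$, and the reduction to the finite-dimensional space of functions on the ball of radius $2$, which is what makes the Riesz step rigorous when $V$ is infinite.
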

\begin{proof}
The fact that $\CD(\kappa,\infty)$ holds at $o$ ensures that the bilinear symmetric form
\begin{eqnarray}
\label{def:cQ}
\cQ(f,g) & := & \Gamma_2(f,g)(o)-\kappa\,\Gamma_1(f,g)(o),
\end{eqnarray}
is positive semi-definite. Thus, it satisfies $\cQ(g,g)\ge 0$ and the Cauchy-Schwartz inequality:
\begin{eqnarray}
\label{CS}
\cQ^2(f,g) & \le & \cQ(f,f)\cQ(g,g),
\end{eqnarray}
for all $f,g\in \dR^V$. On the other hand, if $g$ is a $(\kappa,\lambda)-$height function at $o$, then
\begin{eqnarray*}
\cQ(g,g) \ = \ \lambda, & \textrm{ and } & \cQ(f,g) \ = \ \lambda\,\Delta f(o),
\end{eqnarray*}
for any $f\in \dR^V$. Thus, we must have $\lambda\ge 0$, and (\ref{CS}) becomes
\begin{eqnarray}
\label{CD:check}
\lambda\, \left(\Delta f(o)\right)^2 & \le & \cQ(f,f).
\end{eqnarray}
But this precisely means that $\CD(\kappa,n)$ holds at $o$, with $n:=1/\lambda$. Moreover, this value is optimal, because (\ref{CD:check}) is an equality when $f=g$. Conversely, if $\CD(\kappa,n)$ holds at $o$ for some $n<\infty$, then the linear form $f\mapsto \Delta f(o)$ is continuous w.r.t. the semi-norm induced by the positive semi-definite  bilinear symmetric form (\ref{def:cQ}), so the Riesz representation theorem ensures the existence of $g\in\dR^V$ such that 
\begin{eqnarray}
\forall f\in\dR^V,\quad \Delta f(o) & = & \cQ(f,g).
\end{eqnarray}
Note that we must then have $\cQ(g,g)>0$, as otherwise the map $f\mapsto \Delta f(o)$  would be identically zero, contradicting the fact that $G$ is connected  with at least two points. Thus, we can safely divide $g$ by $\cQ(g,g)$ to obtain a $(\kappa,\lambda)-$height function, where $\lambda=1/\cQ(g,g)$. But then, our assumption that  $\CD(\kappa,n)$ holds at $o$ forces $n\ge 1/\lambda$, by Item 2 in the lemma.
\end{proof}

In order to apply the above lemma, we now need to construct height functions. This is precisely the aim of the next lemma, in which the edge-regularity assumption is crucially used. 
When combined together, Lemmas \ref{lm:main} and \ref{lm:height} readily imply Theorem \ref{th:main}.   

\begin{lemma}[Existence of height functions]\label{lm:height}If  $G$ satisfies the edge-regularity property (\ref{def:edgereg}), then for every $\kappa\in\dR$ and every $o\in V$, there is a $(\kappa,\lambda)-$height function at $o$, with 
\begin{eqnarray}
\label{def:lambda}
\lambda & := & \frac{4+\tau-2\kappa}{4d}.
\end{eqnarray}
\end{lemma}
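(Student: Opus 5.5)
The plan is to look for a height function that depends only on the distance to $o$. Write $S_1:=N_o$ and $S_2:=\{z\in V\colon \dist(o,z)=2\}$, and try
\begin{eqnarray*}
g(o)=0,\qquad g(y)=\alpha \ (y\in S_1),\qquad g(z)=\beta\ (z\in S_2),\qquad g\equiv \text{anything beyond}.
\end{eqnarray*}
Values beyond distance $2$ are irrelevant, because $\Gamma_2(\cdot,\cdot)(o)$ only involves values on $B(o,3)$. The constraint $\Delta g(o)=1$ forces $\alpha=1/d$. Moreover
\begin{eqnarray*}
\Gamma_1(f,g)(o)=\tfrac12\sum_{y\in N_o}(f(y)-f(o))(g(y)-g(o))=\tfrac{\alpha}{2}\,\Delta f(o),
\end{eqnarray*}
so the $\kappa$-term is automatically a multiple of $\Delta f(o)$. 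It therefore suffices to show that, for a suitable $\beta$, the linear functional $f\mapsto \Gamma_2(f,g)(o)$ is a multiple of $\Delta f(o)$. Then $\lambda$ is that multiple minus $\kappa\alpha/2$.

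First I would expand $\Gamma_2(f,g)(o)=\frac12\big(\Delta\Gamma_1(f,g)(o)-\Gamma_1(f,\Delta g)(o)-\Gamma_1(g,\Delta f)(o)\big)$ explicitly as a linear combination of the values $f(o)$, $f(y)$ for $y\in S_1$, and $f(z)$ for $z\in S_2$. This is the standard computation of the local $\Gamma_2$ on the two-ball. Since $\Gamma_2(\mathbf 1,g)=0$, the coefficients sum to zero. It therefore suffices to check two things: the coefficient of each $f(z)$, $z\in S_2$, vanishes, and the coefficient of $f(y)$ is the same for every $y\in S_1$. In that case the functional equals $c\,\Delta f(o)$ for some constant $c$.

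For $z\in S_2$, the only contributions come from paths $o\sim y\sim z$. I expect the coefficient to be proportional to $\#(N_z\cap N_o)$ times a linear expression in $\alpha$ and $\beta$, something like $(\beta-4\alpha)$ up to constants; this is suggested by $g$ mimicking $\dist(o,\cdot)^2/d$. The count $\#(N_z\cap N_o)$ is \emph{not} controlled by edge-regularity, so the whole point is to choose $\beta$ to kill this linear factor. For $y\in S_1$, the coefficient will involve $g$ at $y$, at its neighbours in $S_1$ (there are exactly $\tau$ of them, by edge-regularity), at its neighbours in $S_2$ (there are $d-1-\tau$), and at $o$, together with $\deg y=d$. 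All of these counts are independent of $y$, so the coefficient is constant.

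Finally, I would read off $c$, and then $\lambda=c-\kappa/(2d)$, checking that this equals $(4+\tau-2\kappa)/(4d)$, i.e.\ $c=(4+\tau)/(4d)$. The main obstacle is the bookkeeping in the explicit $\Gamma_2$ expansion. In particular, I need to verify that the $S_2$-coefficients really factor as $\#(N_z\cap N_o)$ times a single affine expression in $(\alpha,\beta)$, with no other $z$-dependent term, so that one choice of $\beta$ works simultaneously for all $z$. If that factorization fails, I would next allow $g$ to take different values on $S_2$ according to $\#(N_z\cap N_o)$ and solve for each separately.
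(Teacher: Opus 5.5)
Your plan is correct and is essentially the paper's argument: the paper takes the radial function $g=\dist(o,\cdot)/d$ directly and verifies $4\Gamma_2(f,g)(o)=(4+\tau)\Delta f(o)/d$, with edge-regularity entering through the constancy of $\Delta \dist(o,\cdot)=d-2-\tau$ on $N_o$. Your heuristic is off, though: the $S_2$-coefficient is exactly $\frac14(\beta-2\alpha)\,\#(N_z\cap N_o)$, so the factorization you worried about does hold. Killing it forces $\beta=2\alpha$, i.e.\ $g$ is the distance function itself rather than its square, and the constant $S_1$-coefficient then gives $c=(4+\tau)/(4d)$ as required.
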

\begin{proof}
Given $o\in V$, we will show that the function  $g\colon x\mapsto \dist(o,x)$ satisfies
\begin{eqnarray}
(\Delta g)(o) & = & d,\\
2\Gamma_1(f,g)(o) & = & \Delta f(o),\\
4\Gamma_2(f,g)(o) & = &(4+\tau) \Delta f(o),
\end{eqnarray}
for all  $f\in \dR^V$. Clearly, the normalized function $g/d$ is then a $(\kappa,\lambda)-$height function, for any $\kappa\in\dR$ and $\lambda$ as in (\ref{def:lambda}). Now, the first two claims are immediate: we have
\begin{eqnarray}
\label{g}
\Delta g(o) & = & \sum_{x\in N_o}\left(g(x)-g(o)\right) \ = \ d,\\
\label{fg1}2\Gamma_1(f,g)(o) & = & \sum_{x\in N_o}\left(f(x)-f(o)\right)\left(g(x)-g(o)\right) \ = \ \Delta f(o),
\end{eqnarray}
for every  $f\in \dR^V$, because $g(x)-g(o)=1$ for all $x\in N_o$. To prove the third claim, we will now explicitate each of the three terms in the definition of $4\Gamma_2(f,g)(o)$:
\begin{eqnarray}
\label{fg2}
4\Gamma_2(f,g)(o) & = & 2\Delta\Gamma_1(f,g)(o)-2\Gamma_1(f,\Delta g)(o)-2\Gamma_1(\Delta f,g)(o).
\end{eqnarray}
The third term is readily obtained by replacing $f$ with $\Delta f$ in (\ref{fg1}):
\begin{eqnarray}
2\Gamma_1(\Delta f,g)(o) & = &  \Delta^2 f(o).
\end{eqnarray}
We now fix $x\in N_o$ and $y\in N_x$, and observe that $g(y)-g(x)$ is $-1$ if $y=o$, $0$ if $y\in N_x\cap N_o$, and $+1$ in the remaining cases. In view of our edge-regularity assumption, we deduce that
\begin{eqnarray*}
\Delta g(x) & = & \sum_{y\in N_x}\left(g(y)-g(x)\right) \ = \ d-2-\tau.
\end{eqnarray*}
Recalling that $\Delta g(o)=d$, we can now compute the second term in (\ref{fg2}):
\begin{eqnarray}
2\Gamma_1(f,\Delta g)(o) & = & \sum_{x\in N_o}\left(f(x)-f(o)\right)\left(\Delta g(x)-\Delta g(o)\right) \ = \ -(\tau+2)\Delta f(o).
\end{eqnarray}
Finally, in order to compute the first term in (\ref{fg2}), we fix again $x\in N_o$ and use the above discussion about the possible values of $g(y)-g(x)$ to write 
\begin{eqnarray*}
2\Gamma_1(f,g)(x) & = & \sum_{y\in N_x}\left(f(y)-f(x)\right)\left(g(y)-g(x)\right)\\
 & = & \Delta f(x)+2\left(f(x)-f(o)\right)+\sum_{y\in N_x\cap N_o}\left(f(x)-f(y)\right).
\end{eqnarray*}
Substracting (\ref{fg1}) and summing over all $x\in N_o$, we arrive at
 \begin{eqnarray*}
2\Delta\Gamma_1(f,g)(o) 
& = & \Delta^2f(o)+2\Delta f(o)+\sum_{x\in N_o}\sum_{y\in N_x\cap N_o}\left(f(x)-f(y)\right)\\
& = & \Delta^2f(o)+2\Delta f(o),
\end{eqnarray*}
where we have crucially observed that the function $(x,y)\mapsto f(x)-f(y)$  is anti-symmetric, and that the double sum runs over all pairs $(x,y)$ in $N_o^2\cap E$, which is a symmetric set. Having explicited each term in (\ref{fg2}), we can now add them up and simplify to obtain
\begin{eqnarray}
4\Gamma_2(f,g)(o) 
& = & (\tau+4)\Delta f(o),
\end{eqnarray}
as desired. 
\end{proof}
\section{A counter-example without edge-regularity}
\label{sec:counterexample}
In this final section, we construct an explicit finite graph satisfying  $\CD(0,\infty)$ but not $\CD(0,n)$ for any finite $n$, showing that our self-improvement result (Theorem \ref{th:main}) does require some structural assumptions. Note, however, that this does not refute Conjecture \ref{conj:main}.
\begin{lemma}[A  graph with no Bakry-\'Emery self-improvement]The $7-$vertex graph  depicted below satisfies  $\CD(0,\infty)$, but not $\CD(0,n)$ for any finite $n$: 
\begin{eqnarray*}
\begin{tikzpicture}[
    vertex/.style={circle, draw, fill=yellow!25, minimum size=20pt, inner sep=0pt},
    edge/.style={draw, thick}
]

% Vertices
\node[vertex] (0) at (6, 4) {C};
\node[vertex] (1) at (2, 0) {E};
\node[vertex] (2) at (5, 2) {G};
\node[vertex] (3) at (2, 4) {B};
\node[vertex] (4) at (3, 1) {F};
\node[vertex] (5) at (6, 0) {D};
\node[vertex] (6) at (2, 2) {A};

% Edges
\draw[edge] (0) -- (3);
\draw[edge] (0) -- (5);
\draw[edge] (0) -- (6);
\draw[edge] (1) -- (4);
\draw[edge] (1) -- (5);
\draw[edge] (1) -- (6);
\draw[edge] (2) -- (4);
\draw[edge] (2) -- (5);
\draw[edge] (2) -- (6);
\draw[edge] (3) -- (6);

\end{tikzpicture}
\end{eqnarray*}
\end{lemma}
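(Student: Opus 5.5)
The proof will be a finite, vertex-by-vertex computation. $\Gamma_2(f,f)(o)$, $\Gamma_1(f,f)(o)$ and $\Delta f(o)$ depend only on the restriction of $f$ to the ball $B(o,3)$, and they are unchanged when a constant is added to $f$. So at each vertex $o$, the form $\cQ(f,f)=\Gamma_2(f,f)(o)$ (with $\kappa=0$) is an explicit quadratic form in the finitely many variables $f(x)-f(o)$. Both claims then reduce to linear algebra on these small matrices.

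The adjacency lists are $A\colon B,C,E,G$; $B\colon A,C$; $C\colon A,B,D$; $D\colon C,E,G$; $E\colon A,D,F$; $F\colon E,G$; $G\colon A,D,F$. Note that $E$ and $G$ are twins: swapping them is a graph automorphism. This cuts the work to six vertex types, namely $A,B,C,D,F$ and $E\simeq G$.

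\textbf{Step 1 ($\CD(0,\infty)$).} For each of these vertices I will write down the matrix of $4\Gamma_2(\cdot,\cdot)(o)$ using the standard local formula
\[
4\Gamma_2(f,f)(o)=\sum_{x\sim o}\sum_{y\sim x}\bigl(f(y)-2f(x)+f(o)\bigr)^2-\sum_{x\sim o}\bigl(f(x)-f(o)\bigr)^2\bigl(d_o+3-d_x\bigr)+(\Delta f(o))^2-2\sum_{x\sim o}\sum_{y\sim x}\ldots,
\]
or, more safely, I will expand directly from the definition $2\Gamma_2=\Delta\Gamma_1-2\Gamma_1(f,\Delta f)$. I will then certify positive semi-definiteness. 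For each vertex I plan to exhibit an explicit sum-of-squares decomposition, obtained by successive Schur complements (completing squares), first in the sphere-of-radius-$2$ variables and then in the neighbor variables. I expect this to be the main obstacle. It is tedious, and at the critical vertex the form is only semi-definite, so the square completion must be organized so that no strictly negative pivot appears. If a decomposition is hard to find by hand, I will instead compute the characteristic polynomial of the reduced matrix and check the signs of its coefficients.

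\textbf{Step 2 (failure of $\CD(0,n)$ for all finite $n$).} I expect the failure to occur at the degree-$4$ vertex $A$, whose neighborhood contains the triangle $ABC$ while $E,G$ are non-adjacent twins. I will exhibit $f$ with $\Gamma_2(f,f)(A)=0$ and $\Delta f(A)\neq0$. Such an $f$ immediately violates (\ref{def:locCD}) with $\kappa=0$ for every $n<\infty$.

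To find it, I will compute the kernel of the reduced $\cQ$ matrix at $A$ produced in Step 1; the last zero pivot in the square completion gives the kernel direction. I will then check that this kernel vector is not orthogonal to the vector representing $f\mapsto\Delta f(A)$. This is exactly the obstruction identified by the converse part of Lemma \ref{lm:main}: a finite $n$ would require $f\mapsto\Delta f(A)$ to be represented as $\cQ(\cdot,g)$, which is impossible if it does not vanish on $\ker\cQ$. If vertex $A$ turns out to be positive definite, I will run the same kernel test at the other vertices. The natural next candidates are $D$, adjacent to both twins, and $C$.
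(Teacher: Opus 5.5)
Your plan is essentially the paper's proof. The paper checks $\CD(0,\infty)$ vertex by vertex, delegating positive semi-definiteness to the Graph Curvature Calculator ($\kappa(A)=0$, positive values elsewhere). It then exhibits $f=(0,-2,-1,2,2,4,2)$ on $(A,\dots,G)$ with $\Delta f(A)=1$ and $\Gamma_2 f(A)=0$. This is exactly the kernel vector your Schur-complement elimination at $A$ produces: with $f(A)=0$, eliminating $f(D),f(F)$ forces $f(E)=f(G)$ and leaves a singular block in $(f(B),f(C),f(E)+f(G))$ with kernel spanned by $(2,1,-4)$.

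Discard your displayed ``standard local formula'', which is wrong as written. The correct version is $4\Gamma_2(f,f)(o)=\sum_{x\sim o}\sum_{y\sim x}\bigl(f(y)-2f(x)+f(o)\bigr)^2-\sum_{x\sim o}(d_o+d_x)\bigl(f(x)-f(o)\bigr)^2+2(\Delta f(o))^2$.
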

\begin{proof}
Writing $\kappa(x)$ for the  largest number such that $\CD(\kappa(x),\infty)$ holds at  $x$, the Graph Curvature Calculator at \cite{MR4458133} provides the following values on the above graph:
\begin{eqnarray*}
\kappa(A) & = & 0 \\ 
\kappa(B) & = & 1.719\\
\kappa(C) & = & 1.325\\ 
\kappa(D) & = & 1\\
\kappa(E) & = & 1.083\\
\kappa(F) & = & 1.5\\
\kappa(G) & = & 1.083.
\end{eqnarray*}
The fact that all those values are non-negative precisely means that $\CD(0,\infty)$ holds. On the other hand, the particular function $f\in \dR^V$ defined by 
\begin{eqnarray*}
f(A) & = & 0\\
f(B) & = & -2\\
f(C) & = & -1\\
f(D) & = & 2\\
f(E) & = & 2\\
f(F) & = & 4\\
f(G) & = & 2,
\end{eqnarray*}
is easily checked to satisfy $\Delta f(A)=1$ and $\Gamma_2 f(A)=0$, which violates the local curvature-dimension condition $\CD(0,n)$ at the vertex $A$ for every $n<\infty$. 
\end{proof}

\section*{Acknowledgment}
This work is supported by the ERC consolidator grant CUTOFF (101123174). Views and
opinions expressed are however those of the authors only and do not necessarily reflect those
of the European Union or the European Research Council Executive Agency. 

\bibliographystyle{plain}
\bibliography{draft}

\end{document}